\theoremstyle{plain}
\newtheorem{theorem}{Theorem}[section]
\theoremstyle{definition}
\newtheorem{remark}{Remark}[section]
\newtheorem*{acknowledgement}{Acknowledgements}
\newcommand{\comment}[1]{}
\DeclareMathOperator{\Id}{\ensuremath{Id}}
\begin{document}
\title{Induced subgraphs of powers of oriented cycles}
\author{Akaki Tikaradze}
\email{ tikar06@gmail.com}
\address{University of Toledo, Department of Mathematics \& Statistics, 
Toledo, OH 43606, USA}
\begin{abstract}

By using a $q$-analogue of the ``magic" matrix introduced by H.Huang in his elegant solution of
the sensitivity conjecture, we give a direct generalization of his result, replacing
a hypercube graph by a Cartesian power of a { directed} $l$-cycle.

\end{abstract}

\maketitle

Very recently, Hao Huang gave a very beautiful and short proof of the sensitivity
conjecture \cite{H}. The key ingredient of his proof was the construction
of the ``magic" matrix that allows an estimate for eigenvalues
of the adjacency matrix of the hypercube graph. 
 In this note we introduce a $q$-analogue of his construction, where $q$ is a primitive
 $ l$-th root of unity, $l>1.$
This allows us to obtain an analogue of Huang's theorem
replacing hypercubes by powers of $l$-cycles.

We will fix once and for all $l>1$ and an $l$-th primitive root of unity $q$.
Recall that in an associative $\mathbb{C}$-algebra, if two elements $q$-commute: $ab=q ba,$
then $(a+b)^l=a^l+b^l.$
Let $x$ be a diagonal $l$-by-$l$ matrix whose $(i,i)$-entry is $q^i.$
Let $y$ be an $l$-by-$l$ permutation matrix representing a cycle: thus $y_{i, i+1}=1=y_{n,1}, 1\leq i<n$
and the rest of the entries of $y$ are 0. It is easy to check that $xy=q yx.$
Also, $x^l=y^l=\Id_l.$

Given an $m$-by-$m$ (complex) matrix $A$, 
by $x(A)$ (respectively $y(A)$) we denote an $ml$-by-$ml$ matrix obtained
by replacing the $(i,j)$-entry $x_{ij}$ by $x_{ij}A$ (resp. replacing $y_{ij}$ by $y_{ij}A$).
Similarly $\Id_l(A)$ denotes an $ml$-by-$ml$ matrix obtained by inserting
$A$ in all diagonal entries.  (These are simply the Kronecker
products $x \otimes A, y \otimes A, \Id_l \otimes A$.)
Clearly $x(A)y(\Id_l)=q y(\Id_l)x(A)$.
By $\tilde{A}$ we will denote $x(A)+y(\Id_m).$ Thus $\tilde{A}$ is an $ml$-by-$ml$ matrix. 
Hence $$(\tilde{A})^l=\Id_l(A^l+\Id_m).$$
Next we define inductively a sequence of matrices as follows: $B_1=y, B_n=\tilde{B}_{n-1}, n\geq 1.$
Thus we may  conclude that $B_n^l=n\Id_{l^n}.$ Hence $B_n$ is a diagonalizable $l^n$-by-$l^n$
matrix with eigenvalues satisfying the equation $x^l-n=0$ , with at least one of them having multiplicity $\geq l^{n-1}.$
Clearly all entries in $B_n$ are either 0 or $l$-th roots of unity.
When $l=2$, we have $q=-1$ and we recover the matrices constructed by Huang
\cite{H}.

Next, we will relate the above matrices with graph theory.
Given a directed graph $S$ with $m$-vertices, by $M(S)$  (the adjacency matrix of $S$) we denote the $m$-by-$m$ matrix
whose $(i, j)$-th entry is the number
of edges from $i$ to $j,$ where $i, j$ are vertices of $S.$ We assume
from now on that there is at most one edge from $i$ to $j$. Thus, the
entries of $M(S)$ consist of $0, 1$ only.

Let $C_l$ denote the oriented $l$-cycle. Let $C_l^n$ be the $n$-th Cartesian power of $C_l.$
Then it is easy to see that $M(C_l^n)$ is obtained from $B_n$ by replacing every entry with its absolute value.

Now the proof of the following result is identical to Tao's exposition of Huang's proof \cite{T}.

\begin{theorem}\label{main}

 Let $S$ be a subset of vertices of  $C_l^n$
of size $(l-1)l^{n-1}+1.$ Then either there exist a vertex in $S$ that is joined with 
at least $n^{1/l}$-many vertices in $S$, or there exist a vertex is $S$ such that
there are $n^{1/l}$-many vertices in $S$ that are joined to it.
\end{theorem}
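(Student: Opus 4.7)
The plan is to adapt Huang's argument; since $B_n$ is not Hermitian when $l>2$, the Cauchy-interlacing step used in Tao's exposition is unavailable, and I will replace it by an explicit dimension count that furnishes an eigenvector of the principal submatrix $B_S$ directly.

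Starting from the identity $B_n^l = n\,\Id_{l^n}$ established in the preamble, $B_n$ is diagonalizable with spectrum contained in the $l$-point set $\{n^{1/l}\zeta : \zeta^l = 1\}$. Pigeonholing over these $l$ points, some eigenvalue $\lambda_*$ (necessarily of modulus $n^{1/l}$) has an eigenspace $V$ of dimension at least $l^{n-1}$. Let $W \subseteq \mathbb{C}^{l^n}$ denote the coordinate subspace of vectors supported on $S$, so that $\dim W = |S| = (l-1)l^{n-1}+1$. Then
\[
\dim(V \cap W)\ \geq\ \dim V + \dim W - l^n\ \geq\ l^{n-1} + (l-1)l^{n-1} + 1 - l^n\ =\ 1,
\]
so I may pick a nonzero $v \in V \cap W$. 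Because $v$ vanishes outside $S$, restricting the equation $B_n v = \lambda_* v$ to the rows indexed by $S$ gives $B_S u = \lambda_* u$, where $B_S$ is the principal submatrix of $B_n$ on $S$ and $u := v|_S \neq 0$. Thus $\lambda_*$ is actually an eigenvalue of $B_S$.

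Now pick $i^* \in S$ with $|u_{i^*}|$ maximal and inspect the $i^*$-th coordinate of $B_S u = \lambda_* u$. The triangle inequality yields
\[
n^{1/l}\,|u_{i^*}|\ =\ |\lambda_*|\,|u_{i^*}|\ \leq\ \sum_{j\in S}|(B_S)_{i^*j}|\,|u_j|\ \leq\ |u_{i^*}|\sum_{j\in S}|(B_S)_{i^*j}|.
\]
Since the nonzero entries of $B_n$ are $l$-th roots of unity and, by construction, agree with those of $M(C_l^n)$ after taking absolute values, the last sum equals the out-degree of $i^*$ in the subgraph of $C_l^n$ induced on $S$. Hence $i^*$ has at least $n^{1/l}$ out-neighbours in $S$, which is the first alternative of the theorem; the second alternative would follow from the same argument applied to a left eigenvector of $B_S$.

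The main obstacle is precisely the step in which non-Hermiticity would otherwise bite: we need $\lambda_*$ itself, not merely some eigenvalue of equal modulus, to appear in the spectrum of $B_S$. The dimension count is rigged exactly so that any nonzero vector in $V \cap W$ supplies such an eigenvector of $B_S$; once that is in hand, the degree bound is just the elementary $\ell^{\infty}$ lower bound on the spectral radius, and no Hermiticity is needed.
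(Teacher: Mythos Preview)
Your proof is correct and follows essentially the same route as the paper: use $B_n^l=n\Id$ to get a large eigenspace, intersect it with the coordinate span of $S$ via the dimension inequality $\dim(V\cap W)\ge\dim V+\dim W-l^n$, and then read off a row of large $\ell^1$-norm from the eigenvector equation for $B_S$. The paper phrases the last step as ``Schur's inequality'' while you spell it out via the coordinate of maximal modulus, and your argument in fact always yields the first (out-degree) alternative; both are the same mechanism.
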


\begin{proof}
We need to show that if we restrict $M(C_l^n)$ to its $|S|\times |S|$ minor
corresponding to vertices in $S,$ then it will have either row or a column containing
at least $n^{1/l}$-nonzero entries. It suffices to show the  analogous statement for
the matrix $B_n.$
  As remarked above, $B_n$ has an eigenvalue $\lambda$ such that $\lambda^l=n$
  and the dimension of the corresponding eigenspace is at least $l^{n-1}.$
Now viewing $B_n$ as a linear operator on the $\mathbb{C}$-span of vertices, we see that
the eigenspace of $\lambda$ must intersect nontrivially the span of $S$.
Hence the $S\times S$ minor of $B_n,$ to be denoted by $B_S$, has an eigenvalue with the absolute
value $n^{1/l}.$ Hence by Schur's inequality, $B_S$ has either a row or a column with $l^1$-norm
 at least $n^{1/l}.$ Now since all nonzero entries of $B_S$ have absolute value 1, we are done.
\end{proof}

\begin{remark}
For $l=2,$ { the `undirected' specialization of} Theorem  \ref{main} is Theorem 1.1  from \cite{H}.
While it is well-known that the lower bound $\sqrt{n}$ is tight for the case of a hypercube \cite{3}, we do not know
how far off from the optimal bound is the lower bound $n^{\frac{1}{l}}$ for $l>2.$ However, the minimal number of vertices
required for a nontrivial conclusion is not too far off from the optimal one as the following
example kindly communicated by the anonymous referee shows. 

Let us represent vertices of $C_l^m$ as $m$-tuples $a=(a_1,\cdots, a_m)$ with $a_i\in \lbrace 0, 1,\cdots, l-1\rbrace.$
So if there is an arrow $a\to b$ then  $\sum_{i=1}^m(a_i-b_i)=1 \mod l.$ Let $S_k$ consists of those vertices
$a$ such that $\sum_ia_i=k \mod l,$ so $|S_k|=l^{m-1}.$ Put $X=\bigcup_{k<\frac{l-1}{2}}S_{2k+1}.$
Then $|X|=[(l-1)/2]l^{m-1}$ and no two vertices in $X$ are connected.

\end{remark}
\begin{acknowledgement}
I am grateful to the anonymous referee for several useful suggestions especially
for providing the example given in the paper.

\end{acknowledgement}


\begin{thebibliography}{H}




\bibitem[1]{H}
H.~Huang, {\em Induced subgraphs of hypercubes and a proof of the Sensitivity Conjecture}, 
Annals of Math. Vol. 190 Issue 3 (2019) 949--955.



\bibitem[2]{T}

T.~Tao, {\em Twisted convolution and the sensitivity conjecture},
\href{https://terrytao.wordpress.com/2019/07/26/twisted-convolution-and-the-sensitivity-conjecture/}{terrytao.wordpress.com}.




\bibitem[3] {3}

F.~Chung, Z.~ Füredi, R.~ Graham,  P.~Seymour,
{\em On induced subgraphs of the cube}, J. Combin. Theory Ser. A 49 (1988), no. 1, 180--187.

\end{thebibliography}
\end{document}